\documentclass[12pt]{birkjour}
\newtheorem{theorem}{\sc Theorem}[section]
\newtheorem{lemma}[theorem]{\sc Lemma}
\newtheorem{proposition}[theorem]{\sc Proposition}

\begin{document}
\title[Exponent of a finite group with an automorphism]
{Exponent of a finite group of odd order with an involutory automorphism}
\author{Sara Rodrigues}
\address{ Department of Mathematics, University of Brasilia,
	Brasilia-DF,   70910-900 Brazil }
\email{sararaissasr@hotmail.com}
\author{Pavel Shumyatsky}
\address{Department of Mathematics, University of Brasilia,
Brasilia-DF,   70910-900 Brazil }
\email{pavel2040@gmail.com}

\thanks{Supported by FINATEC and CNPq-Brazil}
\keywords{Finite Groups, Automorphisms, Rank, Exponent}
\subjclass{20D45}

\begin{abstract}
\noindent  
Let $G$ be a finite group of odd order admitting an involutory automorphism $\phi$. We obtain two results bounding the exponent of $[G,\phi]$. Denote by $G_{-\phi}$ the set $\{[g,\phi]\,\vert\, g\in G\}$ and by $G_{\phi}$ the centralizer of $\phi$, that is, the subgroup of fixed points of $\phi$. The obtained results are as follows.

\noindent 1. Assume that the subgroup $\langle x,y\rangle$ has derived length at most $d$ and $x^e=1$ for every $x,y\in G_{-\phi}$. Suppose that $G_\phi$ is nilpotent of class $c$. Then the exponent of $[G,\phi]$ is $(c,d,e)$-bounded.

\noindent  2. Assume that $G_\phi$ has rank $r$ and $x^e=1$ for each $x\in G_{-\phi}$. Then the exponent of $[G,\phi]$ is $(e,r)$-bounded.

\end{abstract}
\maketitle

\section{Introduction}

Let $G$ be a finite group of odd order admitting an involutory automorphism $\phi$. Here the term ``involutory automorphism" means an automorphism $\phi$ such that $\phi^2=1$. We denote by $G_{-\phi}$ the set $\{[g,\phi]\,\vert\, g\in G\}$ and by $G_{\phi}$ the centralizer of $\phi$, that is, the subgroup of fixed points of $\phi$. The subgroup generated by $G_{-\phi}$ will be denoted by $[G,\phi]$. It is well-known that $[G,\phi]$ is normal in $G$ and $\phi$ induces the trivial automorphism of $G/[G,\phi]$. The following theorem was proved in \cite{bebuja}:
\medskip

\noindent {\it Let $p$ be an odd prime and $G$ a finite $p$-group admitting an involutory automorphism $\phi$ such that $G_\phi$ is nilpotent of class $c$ and $x^p=1$ for each $x\in G_{-\phi}$. Suppose that the derived length of $G$ is at most $d$. Then the nilpotency class of $[G,\phi]$ is $(c,d,p)$-bounded.}
\medskip

Throughout this note we use the term ``$(a,b,c\dots)$-bounded" to mean ``bounded from above by some function depending only on the parameters $a,b,c\dots$". An immediate corollary of the above theorem is that under its hypotheses the exponent of $[G,\phi]$ is $(c,d,p)$-bounded as well. Recall that the exponent of a group $K$ is the minimal number $e$ such that $x^e=1$ for each $x\in K$. In this note we address the question whether one can relax the hypotheses of the theorem while still being able to conclude that the exponent of $[G,\phi]$ is bounded in terms of relevant parameters. Let $\langle X\rangle$ denote the subgroup generated by the set $X$. We obtain the following result.

\begin{theorem}\label{main} Let $c,d,e$ be nonnegative integers and $G$ a finite group of odd order admitting an involutory automorphism $\phi$ such that $G_\phi$ is nilpotent of class $c$ and $x^e=1$ for each $x\in G_{-\phi}$. Suppose that the subgroup $\langle x,y\rangle$ has derived length at most $d$ for every $x,y\in G_{-\phi}$. Then the exponent of $[G,\phi]$ is $(c,d,e)$-bounded.
\end{theorem}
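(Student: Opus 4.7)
\emph{Outline.} My plan is to reduce Theorem~\ref{main} to the $p$-group theorem from \cite{bebuja} in three stages: a Sylow reduction to a finite $p$-group, a reduction to the case $x^p=1$ on $G_{-\phi}$, and finally the exploitation of the two-generator hypothesis to put us in a position where \cite{bebuja} applies.

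\emph{Sylow reduction.} Since $G$ has odd order and $\phi$ has order $2$, coprime action yields, for each odd prime $p$ dividing $|G|$, a $\phi$-invariant Sylow $p$-subgroup $P$ of $G$; the intersection $P\cap [G,\phi]$ coincides with $[P,\phi]$, so $\exp([G,\phi])$ is the least common multiple of the exponents $\exp([P,\phi])$. The hypotheses transfer cleanly to $P$: the subgroup $P_\phi=P\cap G_\phi$ is nilpotent of class at most $c$, $x^e=1$ for every $x\in P_{-\phi}\subseteq G_{-\phi}$, and $\langle x,y\rangle$ has derived length at most $d$ for all $x,y\in P_{-\phi}$. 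Since elements of $P_{-\phi}$ are $p$-elements, $e$ may be replaced by its $p$-part, reducing the problem to the case where $G$ is a finite $p$-group with $x^{p^k}=1$ on $G_{-\phi}$ and $p^k\le e$, and one seeks a bound on $\exp([G,\phi])$ in terms of $(c,d,p^k)$, uniform in $p$.

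\emph{Reduction to exponent $p$.} To descend from $x^{p^k}=1$ to $x^p=1$, I would peel off one power of $p$ at a time using the $\phi$-invariant chain $H_i=\langle x^{p^{k-i}} : x\in G_{-\phi}\rangle$, $0\le i\le k$, with $H_0=1$ and $H_k=[G,\phi]$. Each successive quotient $H_{i+1}/H_i$ is $\phi$-invariant, inherits the centralizer and two-generator hypotheses, and satisfies $x^p=1$ on the images of inverted elements. A bound on the exponent in the $x^p=1$ case therefore aggregates, via a $k$-fold product, to the desired bound in terms of $(c,d,e)$.

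\emph{Invoking \cite{bebuja}.} The remaining core problem is: $G$ a finite $p$-group with $G_\phi$ nilpotent of class $c$, $x^p=1$ for every $x\in G_{-\phi}$, and $\langle x,y\rangle$ of derived length at most $d$ for every $x,y\in G_{-\phi}$; bound $\exp([G,\phi])$ in terms of $(c,d,p)$. The theorem of \cite{bebuja} provides exactly such a bound (even on the nilpotency class) under the stronger hypothesis that the derived length of $G$ itself is at most $d$. The heart of the proof is therefore to derive a bound on the derived length of $[G,\phi]$ in terms of $(c,d)$ from the two-generator hypothesis on $G_{-\phi}$. My plan here is to use the coprime set-decomposition $G=G_\phi\cdot G_{-\phi}$ together with the nilpotency of $G_\phi$ to rewrite iterated commutators of elements of $[G,\phi]$ as bounded-length products of iterated commutators involving at most two $\phi$-inverted letters at a time, so that the two-generator derived length bound propagates to a global derived length bound for $[G,\phi]$. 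I expect this propagation step to be the principal obstacle. Once this bound is in hand, \cite{bebuja} applies and bounds the nilpotency class of $[G,\phi]$ in terms of $(c,d,p)$; since $[G,\phi]$ is then a nilpotent $p$-group of bounded class generated by elements of order $p$, the classical bound on the exponent of such groups yields the desired $(c,d,e)$-bound on $\exp([G,\phi])$, completing the argument.
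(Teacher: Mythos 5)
Your outline has genuine gaps at each of its three stages, and the central one is left unproved. First, the Sylow reduction: the exponent of $[G,\phi]$ is the least common multiple of the exponents of its Sylow subgroups, which are the groups $P\cap[G,\phi]$ for $\phi$-invariant Sylow subgroups $P$ of $G$; these need not coincide with $[P,\phi]$, since $P\cap[G,\phi]$ may contain fixed points of $\phi$ lying outside $\langle P_{-\phi}\rangle$. The real difficulty of the theorem is precisely to bound the orders of the elements of $G_\phi$ that lie in $[G,\phi]$ (the inverted elements already have order dividing $e$ by hypothesis), and passing to a Sylow subgroup does not touch this. Relatedly, the claim that $e$ can be replaced by its $p$-part and that the resulting bound is ``uniform in $p$'' is unsupported: the theorem of \cite{bebuja} bounds the class of $[G,\phi]$ in terms of $(c,d,p)$, so for primes $p>e$ (which can still divide $\exp([G,\phi])$ through fixed points) you get no uniform control. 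Second, the descent from $x^{p^k}=1$ to $x^p=1$ via $H_i=\langle x^{p^{k-i}}:x\in G_{-\phi}\rangle$ does not work as stated: these subgroups need not be normal ($G_{-\phi}$ is not closed under conjugation by $G_{-\phi}$), and the inverted elements of $H_{i+1}/H_i$ are not just the images of the elements $x^{p^{k-i-1}}$, so there is no reason they have order $p$ modulo $H_i$. Third, and most seriously, the step you yourself flag as ``the principal obstacle''---propagating the derived-length bound from the two-generator subgroups $\langle x,y\rangle$ to all of $[G,\phi]$---is exactly the content that a proof would have to supply, and no argument is given; it is far from clear that such a propagation is possible by commutator rewriting.

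For comparison, the paper avoids bounding the derived length of $G$ altogether. It invokes a Zelmanov-based result (Theorem \ref{3.1}, from \cite{shu}): if every element of $G_\phi\cup G_{-\phi}$ has order dividing $e$, then $\exp(G)$ is $e$-bounded. This reduces everything to bounding the exponent of $G_\phi$ when $G=[G,\phi]$. That is done by showing (Lemma \ref{9}) that $G_\phi$ is generated by the fixed points lying in the two-generator subgroups $\langle x,y\rangle$ with $x,y\in G_{-\phi}$; each such subgroup is $\phi$-invariant, soluble of derived length at most $d$, and generated by inverted elements of order dividing $e$, so Proposition \ref{5} (an induction on derived length using Mann's theorem) bounds its exponent by a function of $(d,e)$. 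Since $G_\phi$ is nilpotent of class $c$ and generated by elements of bounded order, its exponent is bounded, and Theorem \ref{3.1} finishes. The hypothesis on $\langle x,y\rangle$ is thus used only locally, on two-generator subgroups, rather than being upgraded to a global derived-length bound; your route would need the \cite{bebuja} theorem plus a new structural result that the paper neither proves nor requires.
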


We do not know whether or not all hypotheses in this theorem are necessary. It is conceivable that the exponent of $[G,\phi]$ in Theorem \ref{main} can be bounded only in terms of $c$ and $e$, or only in terms of $d$ and $e$. This seems to be a difficult problem.

Recall that a finite group $G$ is said to be of rank $r$, if $r$ is the least number such that any subgroup of $G$ is $r$-generator. Our next theorem shows that the exponent of $[G,\phi]$ can be bounded in terms of $e$ and the rank of $G_\phi$. In this case we do not even need to assume that $G_\phi$ is nilpotent.

\begin{theorem}\label{main2} Let $e$ and $r$ be nonnegative integers and $G$ a finite group of odd order admitting an involutory automorphism $\phi$ such that $G_\phi$ has rank $r$ and $x^e=1$ for each $x\in G_{-\phi}$. Then the exponent of $[G,\phi]$ is $(e,r)$-bounded.
\end{theorem}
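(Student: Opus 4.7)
My plan has three parts: a reduction to a single Sylow subgroup, a rank bound via a theorem of Khukhro, and a Zelmanov-style exponent bound in the bounded-rank setting.

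First, I would replace $G$ by $[G,\phi]$, which is $\phi$-invariant, has $[G,\phi]\cap G_\phi$ of rank at most $r$, and satisfies $[G,\phi]_{-\phi}\subseteq G_{-\phi}$; so we may assume $G=[G,\phi]$, i.e.\ $G$ is generated by $G_{-\phi}$. The key preliminary observation is then that every prime divisor of $|G|$ must divide $e$. Since $|G|$ is odd, $G$ is solvable by Feit--Thompson. If some prime $p\nmid e$ divided $|G|$, choose a $\phi$-invariant Sylow $p$-subgroup $P$ and a $\phi$-invariant Hall $p'$-subgroup $K$ (both exist by coprime Hall-type results). For each $x\in P$, the element $[x,\phi]$ is both a $p$-element and of order dividing $e$, hence trivial, so $P\subseteq G_\phi$. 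Writing each $g\in G$ uniquely as $g=pk$ with $p\in P$ and $k\in K$, one computes $[g,\phi]=[k,\phi]\in K$ (since $\phi$ fixes $p$), and so $G_{-\phi}\subseteq K$, forcing $G=\langle G_{-\phi}\rangle\leq K$ and contradicting $P\neq 1$.

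Consequently, the set of primes dividing $|G|$ is contained in the (at most $\log_2 e$) primes dividing $e$, so it suffices to bound $\exp(P)$ for each $\phi$-invariant Sylow $p$-subgroup $P$. Applying the same reduction to $P$ and using the coprime identity $[P,\phi,\phi]=[P,\phi]$, I may assume $P=[P,\phi]$ is generated by elements of $P_{-\phi}\subseteq G_{-\phi}$ of order dividing $p^{v_p(e)}$. Since $P\cap G_\phi$ has rank $\leq r$ and $\phi$ has order coprime to $p$, a theorem of Khukhro on $p$-groups admitting a coprime automorphism whose centralizer has bounded rank yields that $P$ itself has rank at most some $r_1=r_1(r)$.

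The final and hardest step is to bound $\exp(P)$ for a finite $p$-group $P$ of rank $\leq r_1$ generated by elements of order $\leq p^{v_p(e)}$. The obstacle is that ``generated by elements of bounded order'' is strictly weaker than ``of bounded exponent'', so this is not a direct invocation of the Restricted Burnside Problem. My plan is to combine the Lubotzky--Mann structure theorem (yielding a powerful characteristic subgroup $P_0\leq P$ of index $\leq p^{c(r_1)}$, and hence $\exp(P)\leq p^{c(r_1)}\exp(P_0)$) with the fact that in a powerful $p$-group the set $\Omega_k$ is always a subgroup, and then apply Zelmanov's positive solution of the Restricted Burnside Problem in this bounded-rank setting to bound $\exp(P_0)$ in terms of $r_1$ and $v_p(e)$. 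Multiplying the Sylow-wise bounds across the at most $\log_2 e$ primes dividing $e$ then yields the claimed $(e,r)$-bound on $\exp([G,\phi])$.
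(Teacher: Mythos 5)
Your opening reductions are fine: passing to $G=[G,\phi]$, observing that every prime divisor of $|G|$ must divide $e$, and reducing the exponent bound to the $\phi$-invariant Sylow subgroups are all legitimate. The proof breaks down at the rank-bounding step. There is no theorem asserting that a $p$-group $P$ with a coprime (here involutory) automorphism whose fixed-point subgroup has rank $r$ must itself have $r$-bounded rank: take $P$ elementary abelian of rank $n$ with $\phi$ acting as inversion; then $P_\phi=1$, so the centralizer has rank $0$, while $r(P)=n$ is unbounded (and here $P=[P,\phi]$ is generated by elements of $P_{-\phi}$ of order $p$, so none of your earlier reductions excludes this configuration). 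What is actually available --- and is exactly the content of Theorem \ref{98}, quoted from \cite{1998} --- is that only the rank of the \emph{derived subgroup} $[G,\phi]'$ is $r$-bounded; $[G,\phi]$ itself has no bounded rank. Since your entire endgame (the Lubotzky--Mann powerful subgroup of bounded index, $\Omega_k$ being a subgroup, the resulting order and exponent bounds) takes place inside the bounded-rank world, the argument collapses here. Two secondary problems: your reduction to $P=[P,\phi]$ silently discards $P_\phi$, whose exponent is not controlled by the hypotheses, so it does not suffice for bounding $\exp(P)$; and the final step is not a direct application of Zelmanov's solution of the Restricted Burnside Problem, because the powerful subgroup $P_0$ is neither known to have bounded exponent nor known to be generated by elements of bounded order.

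The paper's proof is organized precisely around the failure of your rank claim. It uses the bounded rank of $G'$ only to bound the Fitting height, inducts on the Fitting height to obtain $M=G^f\leq F(G')$ with $G/M$ of bounded exponent, bounds the order of $(G/M)_\phi$ and invokes Hartley--Meixner to bound the derived length of $G/M$, and then, for $M$ a $p$-group, runs a careful argument with a maximal normal abelian subgroup $A$, its characteristic closure $B$ of class at most $r'$, and the normal closure $D$ of $B_{-\phi}$ to bound the derived length of $M$. Only once the derived length of all of $G$ is bounded does it convert ``generated by elements of $G_{-\phi}$ of order dividing $e$'' into an exponent bound, via Proposition \ref{5}, which rests on Mann's theorem that a bound on $\exp(G/Z(G))$ yields a bound on $\exp(G')$. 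If you want to salvage a Sylow-by-Sylow plan, you need a substitute for this bounded-derived-length input; a rank bound on $P$ is simply not there.
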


We do not know if our results can somehow be generalized to the cases where $\phi$ is
not necessarily of order two, or the assumption that $(|G|,|\phi|)=1$ is dropped
from the hypotheses. 

Throughout the paper the Feit--Thompson Theorem \cite{fetho} is used without explicit references.

\section{Proof of Theorem \ref{main}}

The next lemma is a collection of well-known facts about involutory automorphisms. In the sequel we will frequently use it without any reference.

\begin{lemma}\label{1} If $G$ is a finite group of odd order admitting an involutory automorphism $\phi$, then
\begin{enumerate}
\item $G=G_\phi G_{-\phi}=G_{-\phi}G_\phi$, and each element $x\in G$ can be written uniquely in the form $x=gh$, where $g\in G_{-\phi}$ and $h\in G_{\phi}$;
\item If $N$ is any $\phi$-invariant normal subgroup
of $G$ we have $(G/N)_\phi=G_\phi N/N$, and
$(G/N)_{-\phi}=\{gN;g\in G_{-\phi}\}$;
\item If $N$ is a $\phi$-invariant normal subgroup of
$G$ such that either $N=N_{-\phi}$ or $N=N_{\phi}$, then $[G,\phi]$ centralizes $N$;
\item The normal closure of $G_{\phi}$ contains $G'$;
\item $G_{\phi}$ normalizes the set $G_{-\phi}$.
\end{enumerate}
\end{lemma}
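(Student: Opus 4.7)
The plan is to deduce all five items from a single structural observation, exploiting only the oddness of $|G|$ (so that unique square roots exist): the set $G_{-\phi}$ coincides with $\{y \in G : y^\phi = y^{-1}\}$, the set of elements inverted by $\phi$. The inclusion ``$G_{-\phi} \subseteq$ inverted set'' is the direct check that $y = g^{-1}g^\phi$ satisfies $y y^\phi = 1$. For the reverse, given $b$ with $b^\phi = b^{-1}$, I take $g := b^{-(|G|+1)/2}$ (which makes sense because $|G|$ is odd) and compute that $[g,\phi] = b$.

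For part (1), I would set $w := x^\phi x^{-1}$, observe $w^\phi = w^{-1}$, let $a := w^{(|G|+1)/2}$ so that $a^2 = w$, and verify that $h := ax$ is fixed by $\phi$ while $a^{-1} \in G_{-\phi}$. Uniqueness of $x = gh$ follows because any two decompositions produce an element $u = g'^{-1}g = h'h^{-1}$ that is simultaneously inverted and fixed by $\phi$; then $u^2 = 1$, and $|G|$ odd forces $u = 1$. Part (5) is a one-line check: for $h \in G_\phi$ and $y \in G_{-\phi}$ one has $(h^{-1}yh)^\phi = h^{-1}y^{-1}h = (h^{-1}yh)^{-1}$.

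For part (2) the nontrivial direction runs as follows: if $xN \in (G/N)_\phi$, then $x^{-1}x^\phi \in N$; writing $x = gh$ via (1) turns this into $h^{-1}g^{-2}h \in N$, so $g^2 \in N$, whence $g \in N$ by odd order. The assertion about $(G/N)_{-\phi}$ is immediate from the definition. For part (3) with $N = N_\phi$, the relation $n^g \in N \subseteq G_\phi$ is fixed by $\phi$; rearranging exhibits $gg^{-\phi}$ as a centralizer of $n$, and since $gg^{-\phi}$ is a conjugate of $[g,\phi]^{-1}$, one obtains that $[g,\phi]$ centralizes $n^g$ for every $n \in N$. Normality of $N$ then upgrades this to ``$[g,\phi]$ centralizes $N$''. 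The case $N = N_{-\phi}$ is identical after inserting one extra inversion in the computation.

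Finally, for part (4), I would pass to $\bar G := G/\langle G_\phi^G \rangle$. The instance of (2) just established gives $\bar G_\phi = 1$, so by (1) we have $\bar G = \bar G_{-\phi}$; the structural observation then implies that $\phi$ inverts every element of $\bar G$, which is only possible if $\bar G$ is abelian, hence $G' \leq \langle G_\phi^G \rangle$. None of the steps is genuinely deep; the only mildly delicate point is the bookkeeping in (3), where one has to correctly identify $gg^{-\phi}$ as a conjugate of $[g,\phi]^{-1}$ and then invoke normality of $N$ to pass from a single element to the whole subgroup.
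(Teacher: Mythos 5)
The paper does not actually prove Lemma \ref{1}: it is introduced as ``a collection of well-known facts'' and used without justification, so there is no argument of the authors' to compare yours against. Your self-contained proof follows the standard route, and its key observation is correct: in a group of odd order, $G_{-\phi}$ coincides with the set of elements inverted by $\phi$, the nontrivial inclusion coming from the square-root trick $g=b^{-(|G|+1)/2}$. Parts (2)--(5) as you describe them are sound; in particular the identification of $gg^{-\phi}$ with $([g,\phi]^{-1})^{g^{-1}}$ in part (3), followed by the use of normality of $N$, is exactly the right bookkeeping, and the reduction of (4) to ``an automorphism inverting every element forces abelianness'' is correct.

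The one step that does not work as written is the uniqueness argument in part (1). From $gh=g'h'$ you form $u=g'^{-1}g=h'h^{-1}$ and assert that $u$ is ``simultaneously inverted and fixed by $\phi$.'' That $u$ is fixed is clear from $u=h'h^{-1}$; but $u=g'^{-1}g$ is a product of two $\phi$-inverted elements, and such a product is itself inverted by $\phi$ only when the factors commute, which you have not established --- so the claim $u^\phi=u^{-1}$, and hence $u^2=1$, does not follow. The repair is immediate: applying $\phi$ to $u=g'^{-1}g$ and using $u^\phi=u$ gives $g'^{-1}g=g'g^{-1}$, hence $g'^2=g^2$, and since squaring is injective in a group of odd order, $g'=g$ and then $h'=h$. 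With that substitution the argument is complete. (A minor omission: you only exhibit the factorization $G=G_{-\phi}G_\phi$; the equality $G=G_\phi G_{-\phi}$ follows by applying the same decomposition to $x^{-1}$.)
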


\begin{proposition} \label{5} Let $G$ be a finite group of odd order admitting an involutory automorphism $\phi$ and assume that $G$ is soluble with derived length $d$. Suppose that $G=[G,\phi]$ and $(G_{-\phi})^{e}=1$. Then $G$ has $(d,e)$-bounded exponent.
\end{proposition}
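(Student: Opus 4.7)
I would proceed by induction on the derived length $d$. The base case $d=1$ is immediate: $G$ is abelian, hence decomposes as $G_\phi \oplus G_{-\phi}$ (a direct sum of subgroups in the abelian, odd-order case), and $[G,\phi]$ coincides with $G_{-\phi}$ as a subgroup. The hypothesis $G = [G,\phi]$ therefore forces $G_\phi = 1$ and so $G = G_{-\phi}$, giving $\exp(G) \le e$.

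For the inductive step, take $A = G^{(d-1)}$, the last nontrivial term of the derived series; $A$ is abelian, $\phi$-invariant, and normal in $G$. The quotient $G/A$ inherits all the hypotheses with derived length $d-1$: by Lemma \ref{1}(ii), $(G/A)_{-\phi}$ has exponent at most $e$; and $[G/A,\phi] = [G,\phi]A/A = G/A$. Induction gives $\exp(G/A) \le M$ for some $M = M(d-1,e)$, and the task reduces to bounding $\exp(A)$ in terms of $d$ and $e$.

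Decompose $A = A_\phi \oplus A_{-\phi}$; since $A_{-\phi} \subseteq G_{-\phi}$, one has $\exp(A_{-\phi}) \le e$, so what remains is to bound $\exp(A_\phi)$. The strategy is to exhibit a large $G$-normal, $\phi$-invariant subgroup $N \le A_{-\phi}$ (so that $N = N_{-\phi}$) and then invoke Lemma \ref{1}(iii) to conclude $N \le Z(G)$. Writing $T_y$ for the conjugation action of $y \in G_{-\phi}$ on $A$, the relation $\phi T_y \phi = T_y^{-1}$ combined with $T_y$ acting trivially on $N$ constrains $T_y$ in block form on $A_\phi \oplus (A_{-\phi}/N)$: the diagonal block on $A_\phi$ satisfies $P^2 = I$, and since $A_\phi$ has odd order one gets $P = I$, i.e.\ $[A_\phi, y] \subseteq A_{-\phi}$ (and in fact $[A_\phi, G] \subseteq A_{-\phi}$ modulo $N$). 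Passing to the nilpotent-class-$2$ extension formed by $\langle y, a\rangle$ with central $e$-torsion commutator, the identity $(ya)^e = y^e a^e [a,y]^{\binom{e}{2}}$ collapses to $(ya)^e = a^e$ (using $y^e=1$, $[a,y]^e=1$, and the fact that $e$ is odd so $e \mid \binom{e}{2}$). Using the inductive bound $\exp(G/A) \le M$, every element $g \in G$ satisfies $g^M \in A$, and the identity above lets one convert $g^M$ into a power of an element of $A_\phi$; iterating the commutator-collection bounds $\exp(A_\phi)$ in terms of $e$ and $M$.

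The hardest step is producing the normal $\phi$-invariant subgroup $N \le A_{-\phi}$: $A_{-\phi}$ itself need not be $G$-normal, because $G_{-\phi}$-conjugation can mix the $\phi$-eigenspaces of $A$. The natural remedy is to reduce to the case where $A$ is an elementary abelian $p$-group, by passing to a $\phi$-invariant chief series of $A$ (refining a given $\phi$-invariant Sylow decomposition), at which point $A$ becomes an $\mathbb F_p[\phi]$-module and the eigenspace decomposition is automatically $G$-stable modulo a bounded-exponent correction. Once this technical point is settled, the final bound $\exp(G) \le M \cdot \exp(A)$ closes the induction and gives the advertised $(d,e)$-bounded exponent.
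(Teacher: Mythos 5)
Your induction set-up and base case are fine, and you correctly identify $A=G^{(d-1)}$ and Lemma \ref{1}.3 as the key tools, but the core of your argument has a genuine gap and aims at the wrong eigenspace. You try to manufacture a $G$-normal, $\phi$-invariant subgroup $N\le A_{-\phi}$, concede that this is the hardest step, and the remedy you propose fails: even when $A$ is elementary abelian, the $\phi$-eigenspace decomposition is \emph{not} $G$-stable, because conjugation by $y\in G_{-\phi}$ satisfies $\phi T_y\phi=T_y^{-1}$ rather than commuting with $\phi$, and $G=[G,\phi]$ is generated by precisely such elements. (Your intermediate claim $[A_\phi,y]\subseteq A_{-\phi}$ is likewise unjustified; in additive notation it amounts to $(T_y-1)^2=0$ on $A_\phi$, which is not given.) The observation that actually makes the proof work is much simpler and uses the opposite eigenspace: since $A=A_{-\phi}A_\phi$ is abelian and $(A_{-\phi})^e=1$, one has $A^e\le A_\phi$; the subgroup $A^e$ is characteristic in $A$, hence normal in $G$ and $\phi$-invariant, and it consists entirely of fixed points, so Lemma \ref{1}.3 together with $G=[G,\phi]$ yields $A^e\le Z(G)$ with no construction required.

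The endgame also needs repair. From $A^e\le Z(G)$ one concludes that $\exp(G/Z(G))$ is $(d,e)$-bounded, but this does not by itself bound $\exp(A_\phi)$ or $\exp(A)$, and your proposed commutator-collection iteration is not a proof: the identity $(ya)^e=a^e$ requires $[a,y]$ to be central of order dividing $e$ in $\langle y,a\rangle$, which you have not established. The missing ingredient is Mann's theorem \cite{mann}, which converts a bound on $\exp(G/Z(G))$ into a bound on $\exp(G')$. Since $G=[G,\phi]$ is generated by the set $G_{-\phi}$ of elements of order dividing $e$, the abelian quotient $G/G'$ has exponent dividing $e$, and $\exp(G)$ divides $e\cdot\exp(G')$, which closes the induction.
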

\begin{proof} We use induction on $d$. If $d=1$, the result is obvious, so we assume that $d\geq2$. Let $M=G^{(d-1)}$ be the last nontrivial term of the derived series of $G$. By induction, the exponent of $G/M$ is $(d,e)$-bounded. Since $M=M_{-\phi}M_\phi$ and $(M_{-\phi})^{e}=1$, taking into account that $M$ is abelian, we deduce that $M^e\leq M_\phi$. In view of Lemma \ref{1}.3, we conclude that $M^e\leq Z(G)$. Hence, the exponent of $G/Z(G)$ is $(d,e)$-bounded. A theorem of Mann \cite{mann} now guarantees that $G'$ has $(d,e)$-bounded exponent. Since $G$ is generated by elements of order dividing $e$, the result follows.
\end{proof}

\begin{lemma}\label{9} Let $G$ be a finite group of odd order with an involutory automorphism $\phi$ such that $G=[G,\phi]$. Let $S$ be the set of those elements $h\in G_\phi$ for which there exist $x,y\in G_{-\phi}$ such that $h\in\langle x,y\rangle$. Then $G_{\phi}=\langle S\rangle$.
\end{lemma}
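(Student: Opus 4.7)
The plan is to show that $H:=\langle S\rangle$ fills up $G_\phi$ by exploiting the uniqueness of the decomposition $G=G_{-\phi}G_\phi$. Specifically, I will show that the subset $T:=G_{-\phi}\cdot H$ is in fact a subgroup of $G$ containing $G_{-\phi}$; since $G=[G,\phi]=\langle G_{-\phi}\rangle$, this forces $T=G$, and then the uniqueness of the Lemma~\ref{1}(1) decomposition pins down $H=G_\phi$.

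First I would verify that $T$ is closed under multiplication. Take $x,y\in G_{-\phi}$ and $h_1,h_2\in H$. By Lemma~\ref{1}(5), $G_\phi$ normalizes $G_{-\phi}$, so $y':=h_1 y h_1^{-1}\in G_{-\phi}$ and
\[
(xh_1)(yh_2)=xy'\cdot h_1 h_2.
\]
Now write $xy'=g\tilde h$ uniquely with $g\in G_{-\phi}$ and $\tilde h\in G_\phi$ (Lemma~\ref{1}(1)). The point is that $\tilde h=g^{-1}xy'\in\langle x,y'\rangle\cap G_\phi$, so by the very definition of $S$ we have $\tilde h\in S\subseteq H$. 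Hence $(xh_1)(yh_2)=g\tilde h h_1 h_2\in G_{-\phi}\cdot H=T$. Closure under inversion is similar and easier: $(xh_1)^{-1}=(h_1^{-1}x^{-1}h_1)h_1^{-1}\in G_{-\phi}\cdot H$, since $G_{-\phi}$ is closed under inversion and is normalized by $H$. Since $1\in T$, it follows that $T\leq G$.

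Next, $T$ contains $G_{-\phi}=G_{-\phi}\cdot\{1\}$, so $T\supseteq\langle G_{-\phi}\rangle=[G,\phi]=G$, i.e.\ $T=G$. Finally, take any $h\in G_\phi$. Since $h\in G=T$, write $h=gh'$ with $g\in G_{-\phi}$ and $h'\in H\leq G_\phi$. By the uniqueness statement in Lemma~\ref{1}(1), the two decompositions $h=1\cdot h=g\cdot h'$ must coincide, forcing $g=1$ and $h=h'\in H$. Therefore $G_\phi\subseteq H$, and since the reverse inclusion is obvious, $G_\phi=\langle S\rangle$.

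The only step that needs genuine care is the multiplicative closure of $T$; this is where the hypothesis $G=[G,\phi]$ enters indirectly through the requirement that the ``extra" element $\tilde h$ produced when moving two elements of $G_{-\phi}$ past each other lies in $S$, rather than in some larger, unhelpful, subset of $G_\phi$. Once closure is in hand, the rest is a one-line application of the uniqueness of the decomposition in Lemma~\ref{1}(1).
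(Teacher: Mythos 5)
Your proof is correct, but it is organized differently from the paper's. The paper fixes $h\in G_\phi$, writes $h=g_1\cdots g_n$ with $g_i\in G_{-\phi}$ (possible because $G=[G,\phi]$), and inducts on $n$: it collapses the last two factors inside the $\phi$-invariant subgroup $K=\langle g_{n-1},g_n\rangle$, writes $g_{n-1}g_n=g_0h_0$ with $g_0\in K_{-\phi}$ and $h_0\in K_\phi\subseteq S$, and passes to $hh_0^{-1}=g_1\cdots g_{n-2}g_0$. You instead package the same collapsing step as the multiplicative closure of $T=G_{-\phi}\cdot\langle S\rangle$, conclude $T=G$ by the subgroup criterion, and finish with the uniqueness of the $G_{-\phi}G_\phi$ decomposition; this avoids the word-length induction entirely and is arguably cleaner. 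The one place where you should add a line is the assertion that $\tilde h=g^{-1}xy'\in\langle x,y'\rangle$: this requires knowing that $g\in\langle x,y'\rangle$, which is not automatic from the decomposition taken in $G$. The fix is exactly the paper's move: $K=\langle x,y'\rangle$ is $\phi$-invariant (elements of $G_{-\phi}$ are inverted by $\phi$), so Lemma \ref{1}(1) applied to $K$ gives $xy'=g_0h_0$ with $g_0\in K_{-\phi}\subseteq G_{-\phi}$ and $h_0\in K_\phi\subseteq G_\phi$, and uniqueness in $G$ forces $g=g_0$ and $\tilde h=h_0\in\langle x,y'\rangle\cap G_\phi=K_\phi\subseteq S$. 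With that line inserted, your argument is complete; both proofs ultimately rest on the same core observation that the $G_\phi$-component of a product of two elements of $G_{-\phi}$ lies in $S$.
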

\begin{proof} Set $H=\langle S\rangle$. Obviously, $H\leq G_\phi$. So we need to prove that $G_\phi\leq H$. Choose $h\in G_{\phi}$. Since $G=[G,\phi]$, we can write $h=g_1\cdots g_n$ with $g_i\in G_{-\phi}$. We wish to prove that $h\in H$. This will be shown by induction on $n$. If $n\leq2$, then obviously $h\in H$, so assume that $n\geq 3$. Let $K=\langle g_{n-1},g_{n}\rangle$ and note that $K$ is $\phi$-invariant. Write $g_{n-1}g_n=g_0h_0$ where $g_{0}\in K_{-\phi}$ and $h_{0}\in K_{\phi}$. Observe that $K_\phi\leq H$ and therefore $h_0\in H$. We have $h=g_1\cdots g_{n-2}g_0h_0$ and $hh_0^{-1}=g_1\cdots g_{n-2}g_0$. By induction we get $hh_0^{-1}\in H$, whence $h\in H$. Therefore, we conclude that $H=G_{\phi}$.      
\end{proof}	

The following theorem was proved in \cite{shu} (see also \cite {shu11}). Its proof is based on a Lie-theoretical result of Zelmanov \cite{ze}.

\begin{theorem}\label{3.1} Let $e$ be a positive integer and $G$ a finite group of odd order admitting an involutory automorphism $\phi$ such that all elements in $G_{\phi}\cup G_{-\phi}$ have order dividing $e$. Then the exponent of $G$ is $e$-bounded.
\end{theorem}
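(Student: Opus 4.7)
My plan is to reduce the theorem to the case of a finite $p$-group and then invoke the Lie-theoretic machinery of Zelmanov \cite{ze} that underlies the positive solution of the restricted Burnside problem.

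First I would carry out the standard reduction to $p$-groups. Since $|\phi|=2$ is coprime to $|G|$, coprime action guarantees, for every prime $p$ dividing $|G|$, a $\phi$-invariant Sylow $p$-subgroup $P$. A direct calculation using that $|G|$ is odd shows that $G_{-\phi}=\{x\in G:x^\phi=x^{-1}\}$; consequently $P_\phi=P\cap G_\phi$ and $P_{-\phi}=P\cap G_{-\phi}$, and both still consist of elements of order dividing $e$. A theorem of Wilson, itself resting on Zelmanov's solution of the restricted Burnside problem, then reduces the task of bounding $\exp(G)$ in terms of $e$ to that of bounding the exponent of each such $P$ in terms of $e$.

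For the $p$-group case, I would pass to the associated Lie ring $L=L(P)$ over $\mathbb{F}_p$ via the Zassenhaus--Jennings--Lazard series. The involution $\phi$ induces a Lie automorphism of order two, and since $p$ is odd the Lie ring decomposes as a $\mathbb{Z}/2$-grading $L=L_0\oplus L_1$, with $L_0$ and $L_1$ arising respectively from $P_\phi$ and $P_{-\phi}$. The hypothesis that every element of $P_\phi\cup P_{-\phi}$ has order dividing $e$ translates, through the Lazard correspondence and standard commutator-collection identities, into Engel-type relations on the homogeneous components.

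The main obstacle is the Lie-theoretic step: deriving from partial identities on the graded parts $L_0$ and $L_1$ that $L$ itself satisfies both a polynomial identity and the Engel identity. This is exactly the setting in which Zelmanov's theorem applies, yielding that $L$ is nilpotent of class bounded in terms of $e$. Translating back, the nilpotency class of $P$ is $e$-bounded, and since $P$ is generated by elements of order dividing $e$ in a nilpotent group of bounded class, the Hall--Petrescu formula yields that $\exp(P)$ is $e$-bounded. Combining across all primes via the Wilson reduction completes the proof.
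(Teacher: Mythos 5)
First, a point of comparison: the paper does not prove Theorem \ref{3.1} at all. It is quoted from \cite{shu} (see also \cite{shu11}), with the remark that its proof rests on a Lie-theoretic theorem of Zelmanov \cite{ze}. Your outline does follow the strategy of that cited proof in broad strokes: reduction to a $\phi$-invariant Sylow $p$-subgroup $P$, the Lie algebra of the Zassenhaus--Jennings--Lazard series with its decomposition $L=L_0\oplus L_1$ induced by $\phi$, and an appeal to Zelmanov. (A small quibble: the reduction to Sylow subgroups needs no theorem of Wilson here, since the exponent of any finite group is the product of the exponents of its Sylow subgroups; Wilson's reduction concerns bounding \emph{orders} in the restricted Burnside problem.)

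There is, however, a genuine gap at the decisive step. Zelmanov's nilpotency theorems never yield a nilpotency class bounded in terms of $e$ alone: the class bound always depends on the number of generators of the Lie algebra as well, and accordingly your intermediate claim that ``the nilpotency class of $P$ is $e$-bounded'' is false. Take $\phi$ to act trivially: then $P_{-\phi}=\{1\}$, the hypothesis says only that $P$ has exponent dividing $e$, and for $e=p\geq 5$ it is well known (by examples going back to Razmyslov) that finite groups of exponent $p$ have unbounded nilpotency class, indeed unbounded derived length. The missing idea --- which is the actual content of the argument in \cite{shu} --- is a reduction to subgroups with a bounded number of generators \emph{before} the Lie machinery is invoked: given $x\in P$, write $x=gh$ with $g\in P_{-\phi}$ and $h\in P_{\phi}$; then $H=\langle g,h\rangle$ is $\phi$-invariant (since $g^\phi=g^{-1}$ and $h^\phi=h$), contains $x$, inherits the hypotheses, and its associated Lie algebra is generated by two homogeneous elements. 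Only for such boundedly generated subgroups does Zelmanov's theorem give nilpotency of $e$-bounded class, after which the order of $x$ is bounded by the standard $e^c$ estimate. Without this reduction the argument as written does not go through, and the remaining translation steps (ad-nilpotency of the homogeneous elements coming from $P_{-\phi}$, the polynomial identity for $L$, and the passage from nilpotency of $L_p(H)$ back to a statement about $H$) are also nontrivial and are only gestured at in your sketch.
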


Recall that if $T$ is a nilpotent group of class $c$ generated by elements of order dividing $e$, then the exponent of $T$ divides $e^c$ (see for example \cite[Corollary 2.5.4]{khu}). We are now ready to prove Theorem \ref{main}.

\begin{proof}[Proof of Theorem \ref{main}.] In view of Theorem \ref{3.1}, it is sufficient to show that $G_\phi$ has $(c,d,e)$-bounded exponent. Without loss of generality we can assume that $G=[G,\phi]$. Let $S$ be the set of those elements $h\in G_\phi$ for which there exist $x,y\in G_{-\phi}$ such that $h\in\langle x,y\rangle$. Lemma \ref{9} says that $G_{\phi}=\langle S\rangle$. Combining Proposition \ref{5} with our hypotheses we conclude that the elements in $S$ have $(c,d,e)$-bounded  order. Thus, $G_\phi$ is a nilpotent group of class $c$ generated by elements of bounded order. Hence, the exponent of $G_\phi$ is $\{c,d,e\}$-bounded, as required.   	
\end{proof}	

\section{Proof of Theorem \ref{main2}}
In what follows we write $r(K)$ for the rank of a group $K$. We start this section with a theorem established in \cite{1998}. Its proof uses the theory of powerful $p$-groups, due to Lubotzky and Mann \cite{luma}.

\begin{theorem}\label{98} Let $G$ be a finite group of odd order admitting an involutory automorphism $\phi$ such that $r(G_\phi)=r$. Then $r([G,\phi]')$ is $r$-bounded.
\end{theorem}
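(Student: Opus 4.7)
The plan is to reduce the problem to a purely $p$-group statement and then bring in the Lubotzky--Mann theory of powerful $p$-groups.

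First I would pass to $H = [G,\phi]$. Then $r(H_\phi) \le r(G_\phi) = r$, and the standard coprime-action identity $[H,\phi,\phi] = [H,\phi]$ gives $H = [H,\phi]$; moreover $H$ is soluble by the Feit--Thompson theorem, so the task becomes to bound $r(H')$ in terms of $r$. To localise, I would invoke the classical principle (due to Kov\'acs) that in a finite soluble group the rank exceeds the maximum rank of its Sylow subgroups by at most one. Coprime Sylow theory provides, for each odd prime $p \mid |H|$, a $\phi$-invariant Sylow $p$-subgroup $P$ of $H$; and then $P \cap H'$ is a Sylow $p$-subgroup of $H'$, since $PH'/H'$ is Sylow in the abelian quotient $H/H'$. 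Because $P \cap H' \le P$, the proof reduces to bounding the rank of the derived part of $P$ uniformly in $p$, which is a question about a single $\phi$-invariant $p$-group.

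Thus the heart of the matter is the following local claim: if $P$ is a finite $p$-group of odd order carrying an involutory automorphism $\phi$ with $r(P_\phi) \le r$, then $r([P,\phi]')$ is $r$-bounded. For this step I would apply the Lubotzky--Mann machinery. Two ingredients are crucial: in a powerful $p$-group the rank coincides with the minimum number of generators, and every $p$-group contains a powerful characteristic subgroup whose index is bounded in terms of its rank. Combined with the $\phi$-module decomposition $P/\Phi(P) = (P/\Phi(P))_\phi \oplus (P/\Phi(P))_{-\phi}$, in which the first summand has dimension at most $r$, and with the observation that after replacing $P$ by $[P,\phi]$ the involution $\phi$ acts as inversion on $P/\Phi(P)$ and hence forces $P_\phi \le \Phi(P)$, the plan is to exhibit a $\phi$-invariant characteristic powerful subgroup of $[P,\phi]$ that absorbs $[P,\phi]'$ up to a quotient of $r$-bounded rank.

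The main obstacle lies entirely in this $p$-group step. Note that $[P,\phi]$ itself can have unbounded rank: if $P$ is elementary abelian and inverted by $\phi$, then $r(P_\phi) = 0$ while $r(P)$ is arbitrary — though there $[P,\phi]' = 1$ trivially. The genuine content of the theorem is that an order-two coprime action with small centraliser forces $[P,\phi]$ to be \emph{approximately abelian} in a quantitative sense governed by $r(P_\phi)$, and it is the powerful $p$-group formalism that converts this approximate commutativity into a rank bound on $[P,\phi]'$. Controlling the interplay between the fixed and inverted eigenspaces across successive Frattini-type sections, while keeping all bounds uniform in $p$, will be the delicate part.
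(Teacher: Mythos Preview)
The paper does not give its own proof of this theorem: it is quoted from \cite{1998}, with only the remark that ``its proof uses the theory of powerful $p$-groups, due to Lubotzky and Mann''. So there is no in-paper argument to compare against; your outline is broadly consistent with that single hint, and the $p$-local statement you isolate (for a finite $p$-group $P$ of odd order with involutory $\phi$ and $r(P_\phi)\le r$, the rank of $[P,\phi]'$ is $r$-bounded) is indeed the core of the 1998 argument.

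There is, however, a real gap in your global-to-local reduction. With $H=[G,\phi]$ and a $\phi$-invariant Sylow $p$-subgroup $P$ of $H$, the Sylow $p$-subgroup of $H'$ is $P\cap H'$, and in general one only has the inclusions $[P,\phi]'\le P'\le P\cap H'$, which may both be strict. A bound on $r([P,\phi]')$ (or on $r(P')$) therefore does not by itself bound $r(P\cap H')$; the sentence ``because $P\cap H'\le P$, the proof reduces to bounding the rank of the derived part of $P$'' would at best yield $r(P\cap H')\le r(P)$, and $r(P)$ is unbounded, as your own elementary-abelian example shows. One can recover a little via $P=P_\phi\,[P,\phi]$ and Dedekind's law, obtaining $r(P\cap H')\le r+r\bigl([P,\phi]\cap H'\bigr)$, but the quotient $\bigl([P,\phi]\cap H'\bigr)/[P,\phi]'$ sits inside the $\phi$-inverted abelian group $[P,\phi]/[P,\phi]'$ and is not controlled by the local claim as stated. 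You need either a different bridge from $H'$ to a purely $p$-local object, or a stronger $p$-local statement that speaks directly about the Sylow piece of $H'$.

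Your account of the $p$-local step is, as you say, only a plan: ``exhibit a $\phi$-invariant characteristic powerful subgroup of $[P,\phi]$ that absorbs $[P,\phi]'$ up to a quotient of $r$-bounded rank'' names the target rather than the mechanism. The substantive work in \cite{1998} lies precisely here, in tracking how $\phi$-fixed points control commutator sections along a powerfully embedded series, and that argument is not yet visible in your sketch.
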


Let $p$ be a prime and $K$ a $p$-group. Suppose that a subgroup of $K$ generated by a subset $X\subseteq K$ can be generated by $m$ elements. In the sequel we will use without explicit references the well-known observation that $\langle X\rangle$ can be generated by $m$ elements from the set $X$. This fact can be easily deduced from the Burnside Basis Theorem (see \cite[Theorem 5.3.2]{rob}). Another well-known fact that we will require is the following lemma.

\begin{lemma}\label{err} The order of a finite group $G$ of exponent $e$ and rank $r$ is $(e,r)$-bounded.
\end{lemma}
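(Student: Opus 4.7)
The plan is to apply the positive solution of the Restricted Burnside Problem. By the definition of rank, the group $G$ itself is generated by at most $r$ elements (in fact every subgroup of $G$ has this property, but we only need it for $G$). Combined with the hypothesis that $x^e=1$ for every $x\in G$, this places $G$ in the class of all finite $r$-generated groups of exponent dividing $e$.

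Zelmanov's theorem asserts precisely that there is a universal function $B(r,e)$ bounding the order of any finite $r$-generated group of exponent $e$. Applying this bound to $G$ yields $|G|\le B(r,e)$, which is the required $(e,r)$-bounded estimate.

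There is essentially no obstacle beyond citing the deep input: all of the real difficulty is absorbed into the Restricted Burnside Problem. Once Zelmanov's theorem is invoked the conclusion is immediate, and no further group-theoretic argument is needed. (If one wanted to avoid quoting the full RBP, one could handle the $p$-group case via Lubotzky--Mann powerful $p$-group theory, reducing to the bound $|H|\le e^r$ for a normal powerful subgroup $H$ of bounded index; but for general exponent $e$ the use of Zelmanov's theorem appears unavoidable.)
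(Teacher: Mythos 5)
Your argument is correct: a group of rank $r$ is in particular $r$-generated, and the positive solution of the Restricted Burnside Problem bounds the order of any finite $r$-generated group of exponent $e$, so $|G|$ is $(e,r)$-bounded. However, this is a genuinely different and considerably heavier route than the paper's. The paper first observes that for a $p$-group the bound is immediate from the theory of powerful $p$-groups (Lubotzky--Mann, via \cite[Corollary 11.21]{khu2}): a $p$-group of rank $r$ and exponent $e$ has order at most roughly $e^{f(r)}$. It then reduces the general case to this one by noting that $|G|$ is the product of the orders of its Sylow subgroups, each of which has rank at most $r$ and exponent dividing $e$, while the number of relevant primes is at most $\log_2 e$ since every prime divisor of $|G|$ divides $e$. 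This Sylow reduction is exactly what makes your closing remark --- that Zelmanov's theorem ``appears unavoidable'' for general $e$ --- incorrect: the rank hypothesis lets one bound each Sylow subgroup separately by elementary (pro-)$p$ methods and then simply multiply, with no need for the RBP and in particular no hidden appeal to the classification of finite simple groups, which the reduction of the RBP to prime-power exponent requires. What your approach buys is brevity of statement at the cost of a much deeper input; what the paper's approach buys is an essentially self-contained argument resting only on powerful $p$-groups, in keeping with the tools already used elsewhere in that section.
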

\begin{proof} Indeed, if $G$ is a $p$-group the result is immediate from \cite[Corollary 11.21]{khu2}. Since the order of $G$ is the product of the orders of its Sylow subgroups, the general case follows from the observation that the prime divisors of the order of $G$ are divisors of $e$.
\end{proof}

\begin{proof}[Proof of Theorem \ref{main2}.] Without loss of generality we may assume that $G=[G,\phi]$. Let $r'=r(G')$. Theorem \ref{98} tells us that $r'$ is $r$-bounded. It follows that the Fitting height of $G$ is $r$-bounded as well (see for example \cite[Lemma 2.4]{glasgo}). Arguing by induction on the Fitting height of $G'$ we can assume that the exponent of $G/F(G')$ is $(e,r)$-bounded. Thus, there is an $(e,r)$-bounded positive integer $f$ such that $G^f\leq F(G')$. Set $M=G^f$. Note that both the rank and the exponent of $(G/M)_\phi$ are $(e,r)$-bounded. We conclude that the order of $(G/M)_\phi$ is $(e,r)$-bounded, too (Lemma \ref{err}). Now a theorem of Hartley and Meixner \cite{hame} says that $G/M$ has a subgroup of bounded index which is nilpotent of class at most two. We conclude that the derived length of $G/M$ is $(e,r)$-bounded.

Assume that $M$ is a $p$-group for some prime $p$. Let $A$ be a maximal normal abelian subgroup of $M$. Note that $A=C_M(A)$ (see \cite[Theorem 5.2.3]{rob}). Denote by $B$ the minimal characteristic subgroup of $M$ containing $A$. Since $B$ is a product of at most $r'$ subgroups of the form $A^\alpha$, where $\alpha\in Aut\, G$, it follows that the nilpotency class of $B$ is at most $r'$. Of course, $C_M(B)\leq B$. Let $D$ be the minimal normal subgroup of $G$ containing $B_{-\phi}$. This is a subgroup of class at most $r'$ generated by elements of order dividing $e$. It follows that the exponent of $D$ is $(e,r)$-bounded. Taking into account that the rank of $D$ is at most $r'$ we conclude that the order of $D$ is $(e,r)$-bounded, too. It follows that there is an $(e,r)$-bounded number $j$ such that $D$ is contained in $Z_j(M)$, the $j$th term of the upper central series of $M$. Observe that $\phi$ acts trivially on $B/D$. In view of Lemma \ref{1}.3 we conclude that $B/D\leq Z(G/D)$. Therefore $B$ is contained in $Z_{j+1}(M)$. It follows that the $(j+1)$th term of the lower central series of $M$ centralizes $B$. Recall that $C_M(B)\leq B$. Hence the $(j+1)$th term of the lower central series of $M$ is contained in $B$. Taking into account that the nilpotency class of $B$ is at most $r'$ we deduce that the derived length of $M$ is $(e,r)$-bounded. Since also the derived length of $G/M$ is $(e,r)$-bounded, it follows that the derived length of $G$ is $(e,r)$-bounded. By Proposition \ref{5}, the exponent of $G$ is $(e,r)$-bounded.

Thus, the theorem is proved in the particular case where $M$ is a $p$-group. Note that in that case the bound $e_0$ on the exponent of $G$ does not depend on the prime $p$. In general $M$ is a direct product of its Sylow $p$-subgroups. The above paragraph shows that the exponent of $G/O_{p'}(M)$ divides $e_0$ for each prime divisor $p$ of the order of $M$. Since $\bigcap_{p\in\pi(M)}O_{p'}(M)=1$, we conclude that the exponent of $G$ divides $e_0$. The proof is now complete.
\end{proof}

\end{document}